  \crefname{theorem}{Theorem}{Theorems}
  \crefname{theorem}{Theorem}{Theorems}
  \crefname{lemma}{Lemma}{Lemmas}
  \crefname{lem}{Lemma}{Lemmas}
  \crefname{remark}{Remark}{Remarks}
  \crefname{prop}{Proposition}{Propositions}
  \crefname{defn}{Definition}{Definitions}
  \crefname{corollary}{Corollary}{Corollaries}
  \crefname{section}{Section}{Sections}
  \crefname{figure}{Figure}{Figures}
\newtheorem{thm}{Theorem}[section]
\newtheorem{lemma}[thm]{Lemma}
\newtheorem{prop}[thm]{Proposition}
\numberwithin{equation}{section}
\theoremstyle{definition}
\newtheorem{remark}[thm]{Remark}
\numberwithin{equation}{section}
\def\cN{\mathcal{N}}
\def\cM{\mathcal{M}}
\def\cF{\mathcal{F}}
\def\cA{\mathcal{A}}
\def\Q{\mathbb{Q}}
\def\P{\mathbb{P}}
\def\E{\mathbb{E}}
\def\R{\mathbb{R}}
\def\Z{\mathbb{Z}}
\def  \p- {p\textunderscore}
\def\eps{\varepsilon}
\DeclareMathOperator{\var}{Var}
\DeclareMathOperator{\cov}{Cov}
\newcommand{\indic}[1]{\mathbf{1}_{\{#1\}}}
\begin{document}

\title{An elementary approach to Gaussian multiplicative chaos} 

\author{
Nathana\"el Berestycki\footnote{Statistical Laboratory, DPMMS, University of Cambridge.
\texttt{beresty@statslab.cam.ac.uk}. Supported in part by EPSRC grants EP/L018896/1 and EP/I03372X/1.
}
}

\maketitle




\begin{abstract}
A completely elementary and self-contained proof of convergence of Gaussian multiplicative chaos is given. The argument shows further that the limiting random measure is nontrivial in the entire subcritical phase $(\gamma < \sqrt{2d})$ and that the limit is universal (i.e., the limiting measure is independent of the regularisation of the underlying field).
\end{abstract}

\noindent \textbf{Keywords: }{Gaussian multiplicative chaos, Gaussian free field, thick points, Liouville quantum gravity} 

\medskip \noindent \textbf{AMS subject classification: }{60K37; 60J67; 60J65.} 

\section{Introduction}

Gaussian multiplicative chaos is a theory initiated by Kahane \cite{Kahane}, whose goal (in a slightly updated language) is the definition and study of random measures of the form
\begin{equation}\label{goal}
\mu(dx) = e^{\gamma h(x) - \frac12 \gamma^2 \E( h(x)^2)} \sigma(dx)
\end{equation}
where $h$ is a centred Gaussian generalised random field subject to certain assumptions, $\gamma>0$, and $\sigma$ is a given reference measure on a domain $D$ of Euclidean space. Since $h$ is not defined pointwise but exists only as a distribution, it is not clear what meaning to give to \eqref{goal} a priori. In fact, some regularisation of the field and a suitable renormalisation have to be performed in order to construct $\mu$. The theory has generated considerable renewed interest notably because of its connection with two-dimensional Liouville Quantum Gravity and the KPZ relations. This is the particular case of the theory when $d=2$ and $h$ is the massless Gaussian free field or GFF (see \cite{Sheffield, GFFnotes}) with appropriate boundary conditions. The paper by Duplantier and Sheffield \cite{DuplantierSheffield}
constructed the volume measure $\mu$ in this particular case using arguments restriced to the case of the GFF such as the domain Markov property and obtained a version of the KPZ relations.
 Simultaneously, Rhodes and Vargas, \cite{RhodesVargas}, among other things, showed that Gaussian multiplicative chaos can be used directly to construct the same object. They also gave a simpler and  more general proof of a stronger version of the KPZ relation. We refer the reader to \cite{Garban} for an excellent introduction to this area. See also \cite{GFFnotes} for a more detailed exposition.

Kahane's original work assumes that the covariance kernel $K$ of $h$ is $\sigma$-positive, meaning that $K(x,y)$ may be written as the pointwise sum $K(x,y) = \sum_k K_k(x,y)$ where the summands $K_k$ are nonnegative symmetric definite continuous functions and, crucially, $K_k(x,y) \ge 0$ pointwise. Under this assumption (which is somewhat restrictive as it is hard to check in practice), he was able to show that a truncation of $h$ associated with the $\sigma$-positive decomposition of $K$ gives rise to a well-defined measure $\mu$ as in \eqref{goal} and characterised the values of $\gamma$ for which it is nontrivial for a given reference measure $\sigma$. He also studied fine properties of the resulting random measure $\mu$ and showed that its law does not depend on the decomposition of the $\sigma$-positive kernel $K$ into  positive summands.

Much more recently, Robert and Vargas \cite{RobertVargas} (motivated by applications to three-dimensional turbulence) obtained a significant generalisation of this theory. They were able to show that, without assuming $\sigma$-positivity, regularising the field with a general mollifier function $\theta$ subject to mild assumptions, gives rise to a sequence of measures $\mu_\eps$ such that $\mu_\eps(S)$ converges in law and the law of the limit does not depend on the regularising function $\theta$.  Even more recently, Shamov \cite{shamov} showed in a very general setting that convergence holds in probability and the limit does not depend on the regularisation. In particular the measure $\mu$ is measurable with respect to $h$. (Conversely,  by a result of \cite{BSS}, $h$ is measurable with respect to $\mu$, at least in the case of the two-dimensional GFF and $\sigma(dz) = dz$ being the Lebesgue measure). This was also the subject of a recent preprint by Junnila and Saksman \cite{JunnilaSaksman} whose results, remarkably, cover the critical case.

\medskip The purpose of this short note is to provide an elementary and completely self-contained proof of Kahane's theory together with some of the important developments above. Eventually we are able to reprove convergence in probability (and in $L^1$) and show nontriviality in the entire subcritical phase $(\gamma < \sqrt{2d}$), together with the universality result showing uniqueness of the limit (independence with respect to the regularisation function $\theta$). While the setup is slightly less general than Shamov \cite{shamov}, we feel that the result and its proof are nevertheless interesting because of the completely elementary nature of the arguments, and the fact that they cover the most interesting cases without significant assumptions on the covariance kernel $K$ (in particular, no $\sigma$-positivity assumption is made).

\medskip \noindent \textbf{Assumptions: } Let $D \subset \R^k$ be a domain.
Consider a nonnegative definite kernel $K(x,y)$ of the form
\begin{equation}\label{cov}
 K(x,y) = \log (|x-y|^{-1} ) + g(x,y)
\end{equation}
where $g$ is continuous over $\bar D \times \bar D$. Set
$$\cM_+ =\{ \rho \text{ nonnegative measure in $D$ such that:} \iint |K(x,y)| \rho(dx) \rho(dy) < \infty\},$$ and set $\cM$ be the set of signed measures of the form  $ \rho = \rho_+ - \rho_-$, where $\rho_\pm \in \cM_+$, and note that $\cM$ contains all smooth compactly supported functions in $D$. Let $h$ be the centered Gaussian generalised function with covariance $K$. That is, we view $h$ as a stochastic process indexed by $\cM$, characterised by the two properties that: $(h, \rho)$ is linear in $\rho \in \cM$ in the sense that $(h, \alpha \rho_1 + \beta \rho_2 ) = \alpha (h, \rho_1) + \beta(h, \rho_2)$ almost surely; and for any $\rho \in \cM$,
$$
(h,\rho) \text{ is a centered Gaussian random variable with variance }\iint  K(x,y) \rho(dx)\rho(dy).
$$
We will write $\int h(x) \rho(dx) $ for the random variable $(h,\rho)$ with an abuse of notation. Note that this setup covers the case of a Gaussian free field in two dimensions with say Dirichlet boundary conditions (but also the case of free or Neumann boundary conditions, by changing if necessary $\gamma$ into $2\gamma$). See \cite{GFFnotes} (which takes a similar viewpoint) for more details on this. We extend the definition of $h$ outside of $D$ by setting $h|_{D^c} = 0$, so for any measure $\rho$ such that $\rho|_D \in \cM$, by definition $(h, \rho) = (h , \rho|_D)$.

Let $\sigma$ be a Radon measure on $\bar  D$ of dimension at least $d$ (where $0\le d \le k$), i.e.,
\begin{equation}\label{dim}
\iint_{\ \bar D\times \bar D} \frac1{| x-y|^{d-\eps}}\sigma(dx) \sigma(dy) < \infty
\end{equation}
for all $\eps>0$. In particular $\sigma$ is a finite measure since $\bar D$ is bounded.
Let $S \subset D$ and let $\theta$ be a fixed nonnegative Radon measure on $\R^k$ supported in the unit ball $B(0,1)$, such that $\theta(\R^d) = 1$ and
\begin{equation}\label{eq:assumptiontheta}
\int |\log (1/|x-y|)| \theta(dy) \le C < \infty
\end{equation}
where $C$ does not depend on $x \in B(0,5)$. It is easy to check that the condition \eqref{eq:assumptiontheta} is satisfied whenever $\theta$ has a Lebesgue density in $L^p$ for some $p>1$ supported in $B(0,1)$, and also in many other cases, e.g. the uniform distribution on the unit circle.
Set $\theta_\eps(\cdot)$ to be the image of the measure $\theta$ under $x \mapsto \eps x$, i.e. $\theta_\eps(A) =  \theta (A/\eps)$ for all Borel sets $A$, which we view as an approximation of the identity based on $\theta$ (we will sometimes write $\theta_\eps(x)dx$ for the measure $\theta_\eps(dx)$ with an abuse of notations). We will also write $\theta_{x, \eps}(\cdot)$ for the measure $\theta_\eps$ translated by $x$. For $x \in S$, note that\footnote{It is tempting to only use $\theta \in \cM$ instead of \eqref{eq:assumptiontheta} as an assumption on $\theta$. However this leads to problems in the proofs below, as pointed out by an anonymous referee, even if we assume $\theta$ has a density. Consider the following instructive example in dimension $d=1$. Take $f(x) = (c/|x|) (1+ \log^+(1/|x|))^{-2}$ for $x \in (-1,1)$ and $f(x) = 0$ else. Then if $\theta(dx) = f(x) dx$, we have $\theta \in \cM$. However, it is not the case that \eqref{eq:roughcov} holds. In fact even the basic fact \eqref{covbound} does not hold, as the left hand side is unbounded as $t \to \infty$ (since $\int \log(1/|x|) f(x) dx = \infty$).\label{footnote}}
 by \eqref{eq:assumptiontheta}, the translated measure $\theta_{x,\eps}\in \cM$, so we can define an $\eps$-regularisation of the field $h$ by setting for $\eps$ small,
\begin{equation}\label{convol}
h_\eps(x) = h \ast \theta_\eps(x) = \int h(y) \theta_\eps( x-y ) dy  = \int h(y) \theta_{x, \eps}(dy), x \in S.
\end{equation}
One can check that $\var (h_\eps(x) - h_\eps(x')) \to 0 $ as $|x-x'| \to0$ for a fixed $\eps$, so there exists a version of the stochastic process $h$ such that $h_\eps(x)$ is almost surely a Borel measurable function of $x \in S$ (see e.g. Proposition 2.1.12 in \cite{GN}).
Hence let
$$
\mu_\eps(S) = I_\eps = \int_S e^{\gamma h_\eps(z) - \frac{\gamma^2}{2}  \E( h_\eps(z)^2 ) }\sigma(dz).
$$
In the case of 2d Liouville quantum gravity, the natural choice for $\sigma$ is often $\sigma(dz) = R(z, D)^{\gamma^2/2}dz$ where $R(z,D)$ denotes the conformal radius of the point $z$ in $D$, and the natural choice for the measure $\theta$ is often the uniform distribution on the unit circle (so $h_\eps(z)$ is the usual circle average process of $h$). However, the case where $\sigma$ is the occupation measure of an (independent) planar Brownian motion is also of interest as it is used for defining the Liouville Brownian motion \cite{GRV, NB}, the canonical diffusion process in Liouville quantum gravity. In this example, $\sigma$ is singular with respect to Lebesgue measure, yet $\sigma$ is of dimension two in the sense of \eqref{dim}.

\begin{thm}\label{T:conv}
Assume that $\gamma < \sqrt{2d}$. Then $\mu_\eps(S)$ converges in probability and in $L^1(\P)$ to a limit $\mu(S)$. Furthermore the random variable $\mu(S)$ does not depend on the choice of the regularising kernel $\theta$ subject to the above assumptions. Furthermore, $\mu$ defines a Borel measure on $D$ and $\mu_\eps$ converges in probability towards $\mu$ for the topology of weak convergence of measures on $D$.
\end{thm}


\noindent \textbf{Acknowledgements:} The idea for this work emerged while preparing a presentation for a reading group at the Newton institute during the semester on \emph{Random Geometry}. I thank the participants of the reading group for their questions. I am particularly grateful to Juhan Aru and St\'ephane Beno\^ist for some useful discussions and comments on a preliminary draft. Thanks also to Ofer Zeitouni for some comments on the Karhunen--Lo\` eve expansion of Gaussian fields, to Pascal Maillard for pointing out some typos in an early version of the paper, and to Ellen Powell for useful comments while preparing the final version. An anonymous referee provided very useful suggestions which improved the presentation of the paper and made a number of interesting observations, some of which I chose to include in footnote \ref{footnote}; I am very grateful for this.



\section{Main idea}

It is well known and relatively easy to see that for $\gamma$ sufficiently small (namely $\gamma < \sqrt{d}$), the multiplicative measure $\mu_\eps$ are uniformly integrable: indeed the quantity $\mu_\eps(S)$ is then bounded in $L^2$, hence any limit must be nontrivial.

Therefore difficulties mainly arise in the phase where $\gamma \in [\sqrt{d}, \sqrt{2d})$. (In Liouville Quantum Gravity, this is the phase of principal interest as this is precisely the measures which are thought to arise as scaling limits of FK-weighted planar maps).  The main idea for this work is the following very elementary observation. Any limiting measure $\mu$ must be supported on the so-called \emph{$\gamma$-thick points} of the field $h$: that is, on points $x$ such that
\begin{equation}
\lim_{\eps\to 0} \frac{h_\eps(x) }{\log (1/\eps)} = \gamma.
\end{equation}
Such points were studied in detail in the case of the two-dimensional Gaussian Free Field by \cite{HMP} but a related notion was already apparent in the early work of Kahane \cite{Kahane} who pointed to its importance. That any limiting measure would have to be supported by $\gamma$-thick points is apparent from the definition of $\mu_\eps$ and Girsanov's lemma (or rather the Cameron--Martin formula). Indeed this implies that, when biasing the law of the field by a factor proportional to $e^{\gamma h_\eps (x) }$, the mean value of $h_\eps(x)$ is shifted from 0 to $\gamma \var( h_\eps(x)) = \gamma \log(1/\eps) + O(1)$.

Therefore, one can pick $\alpha > \gamma$ and call a point $x$ bad if its thickness is greater than $\alpha$, and good otherwise (in fact the key will be to take a slightly more restrictive definition of good points). We then consider the normalised measure $e^{\gamma h_\eps(x)}dx$, but \emph{restricted to good points}. As it turns out, the $L^1$ contribution of bad points is easily shown to be negligible (essentially by the above Cameron--Martin--Girsanov observation), while the remaining part is shown to remain bounded in $L^2(\P)$. We will see that our definition of \emph{good points} allows one to make the relevant $L^2$ computation very simple.

Convergence is then shown to be a consequence of the $L^2$ boundedness of the good part (roughly, the good part is a Cauchy sequence in $L^2(\P)$), while the bad part is small in $L^1(\P)$. Uniqueness comes from the fact that once uniform integrability is established for the regularisations of the field such as \eqref{convol}, we can also get uniform integrability for another approximation of the field, this time arising from the Karhunen--Loeve expansion of $h$. This gives another approximation of the measure which turns out to be a martingale, and hence also has a limit. We then show that the two measures must agree, thereby deducing uniqueness.

\section{Uniform Integrability}
The goal of this section will be to prove:

\begin{prop}\label{P:UI}
$I_\eps$ is uniformly integrable.
\end{prop}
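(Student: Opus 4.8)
The plan follows the strategy outlined in the ``Main idea'' section, reducing uniform integrability to an $L^2$ bound on a truncated measure. The first step is to fix a thickness parameter $\alpha$ with $\gamma < \alpha < \sqrt{2d}$ and split each point according to whether the field $h_\eps(x)$ exceeds the threshold $\alpha \log(1/\eps)$ along the regularisation. Concretely, I would introduce a good event $G_\eps(x)$ requiring that $h_\delta(x) \le \alpha \log(1/\delta)$ for all dyadic scales $\delta \ge \eps$ (or for a suitable continuum of scales), and write $I_\eps = I_\eps^{\mathrm{good}} + I_\eps^{\mathrm{bad}}$, where $I_\eps^{\mathrm{good}} = \int_S e^{\gamma h_\eps(z) - \frac{\gamma^2}{2}\E(h_\eps(z)^2)} \indic{G_\eps(z)} \sigma(dz)$ and $I_\eps^{\mathrm{bad}}$ is the complementary integral. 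Uniform integrability of $I_\eps$ will follow if the bad part is small in $L^1$ uniformly in $\eps$ and the good part is bounded in $L^2$, since an $L^2$-bounded family is automatically uniformly integrable, and a uniformly small $L^1$ remainder does not destroy this.

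The second step handles the bad part via Girsanov (the Cameron--Martin theorem). Because the measure $e^{\gamma h_\eps(z)-\frac{\gamma^2}{2}\E(h_\eps(z)^2)}\sigma(dz)$ is a change of measure, I would compute $\E[I_\eps^{\mathrm{bad}}]$ by tilting: under the tilt at the point $z$, the field $h_\delta(z)$ acquires mean $\gamma\,\cov(h_\delta(z),h_\eps(z)) = \gamma\log(1/\delta)+O(1)$, so the event $\{h_\delta(z) > \alpha\log(1/\delta)\}$ becomes a large deviation for a Gaussian of the shifted mean, costing $\exp(-\tfrac{(\alpha-\gamma)^2}{2}\log(1/\delta))=\delta^{(\alpha-\gamma)^2/2}$ up to constants. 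Summing (or integrating) this cost over the scales $\delta$ and integrating against $\sigma$ — using only the crude dimension bound \eqref{dim} and the logarithmic form \eqref{cov} of the covariance — yields $\E[I_\eps^{\mathrm{bad}}] \to 0$ as $\eps\to 0$. This is the clean, essentially one-line Girsanov computation the introduction advertises.

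The third and most delicate step is the $L^2$ bound on the good part. Here I would expand $\E[(I_\eps^{\mathrm{good}})^2]$ as a double integral over $z,w\in S$ of $\E[e^{\gamma h_\eps(z)+\gamma h_\eps(w)} \indic{G_\eps(z)\cap G_\eps(w)}]$ against $\sigma(dz)\sigma(dw)$, with the Gaussian normalisation. The key covariance input is $\cov(h_\eps(z),h_\eps(w)) = \log(1/(|z-w|\vee\eps)) + O(1)$, so the exponential of the cross term contributes a singular factor like $|z-w|^{-\gamma^2}$ when $|z-w| \ge \eps$. On its own $|z-w|^{-\gamma^2}$ is \emph{not} $\sigma\otimes\sigma$-integrable once $\gamma^2 \ge d$, which is exactly the supercritical-$L^2$ regime; the whole point of the good-point restriction is that the indicator $\indic{G_\eps}$ caps the field and hence damps the contribution at scales $|z-w|\sim\delta$ so that the effective exponent becomes $|z-w|^{-\alpha\gamma+(\text{correction})}$ rather than $|z-w|^{-\gamma^2}$, and crucially for $\alpha<\sqrt{2d}$ the resulting power stays strictly below $d$, rendering the double integral convergent against $\sigma$ (again by \eqref{dim}). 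I expect this interplay — choosing the definition of ``good'' (which scales to control, and at what threshold $\alpha$) so that the Gaussian tilt from the two insertion points is simultaneously tamed at every scale down to $|z-w|$ — to be the main obstacle, since one must ensure the restriction is strong enough to beat the singularity yet weak enough that the bad part remains negligible; the admissible window is precisely $\gamma<\alpha<\sqrt{2d}$. Once $\sup_\eps \E[(I_\eps^{\mathrm{good}})^2]<\infty$ is obtained, combining it with the $L^1$-smallness of $I_\eps^{\mathrm{bad}}$ gives uniform integrability of $I_\eps$, completing the proof of \cref{P:UI}.
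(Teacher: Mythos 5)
Your architecture is exactly the paper's: fix $\alpha>\gamma$, define a good event capping $h_{\eps'}(x)$ by $\alpha\log(1/\eps')$ over scales $\eps'\in[\eps,\eps_0]$, show the bad part is small in $L^1$ by a one-point Girsanov tilt, and show the good part is bounded in $L^2$. The bad-part step is essentially right (one small correction: the bound you get is $O(\eps_0^{(\alpha-\gamma)^2/2})$, small uniformly in $\eps$ as $\eps_0\to 0$, not a quantity tending to $0$ as $\eps\to 0$ for fixed $\eps_0$; this is still exactly what uniform integrability requires).

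The genuine gap is in the $L^2$ step, which you correctly identify as the main obstacle but do not resolve, and the exponent you guess is not the right one. Writing $\eps'=\eps\vee|x-y|/3$, the mechanism is: under the \emph{two-point} tilt by $e^{\bar h_\eps(x)+\bar h_\eps(y)}$, the mean of $h_{\eps'}(x)$ is $2\gamma\log(1/\eps')+O(1)$ --- both insertion points contribute a full $\gamma\log(1/\eps')$ because $y$ lies within distance $3\eps'$ of $x$ --- so the single constraint $h_{\eps'}(x)\le\alpha\log(1/\eps')$ from the good event is a downward deviation of size $(2\gamma-\alpha)\log(1/\eps')$ for a Gaussian of variance $\log(1/\eps')+O(1)$, costing $(\eps')^{(2\gamma-\alpha)^2/2}$. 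This turns the integrand into $|(x-y)\vee\eps|^{(2\gamma-\alpha)^2/2-\gamma^2}$, which is $\sigma\otimes\sigma$-integrable by \eqref{dim} once $(2\gamma-\alpha)^2/2-\gamma^2>-d$; letting $\alpha\downarrow\gamma$ this becomes $d>\gamma^2/2$, i.e.\ the full subcritical range. Your proposed effective exponent ``$|z-w|^{-\alpha\gamma+(\text{correction})}$'' cannot work as stated: in the regime $\gamma\in[\sqrt d,\sqrt{2d})$ where the truncation is actually needed, $\alpha\gamma>\gamma^2\ge d$, so that power is never integrable and the claim ``for $\alpha<\sqrt{2d}$ the resulting power stays strictly below $d$'' fails for it. The missing idea is precisely the factor $2\gamma$ in the tilted mean (Girsanov applied at the merging scale $\eps'$ with \emph{both} exponential weights), which is what produces the gain $(2\gamma-\alpha)^2/2$ and closes the argument; without that computation the proof of \cref{P:UI} is not complete.
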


\begin{proof}
Let $\alpha > 0$ be fixed (it will be chosen $> \gamma$ and very close to $\gamma$ soon). We will use the following notation in the rest of the article: for $r >0$ we define
\begin{equation}\label{rbar}
\bar r = e^{  \lceil \log r \rceil} = \inf \{ e^k: k \in \Z, e^k > r \}
\end{equation}
 to be the closest upper $e$-adic approximation of $r$. We define a good event
$$
G^\alpha_\eps(x) = \{ h_{\bar r}(x) \le \alpha \log (1/\bar r) \text{ for all $r \in [\eps, \eps_0] $} \}
$$
with $\eps_0 \le 1$ for instance. This is the good event that the point $x$ is never too thick up to scale $\eps$.
 Further let $\bar h_\eps(x) = \gamma h_\eps(x) - (\gamma^2/2 )  \E( h_\eps(x)^2)$ to ease notations.

\begin{lemma}[Ordinary points are not thick]
\label{L:typicalthick} For any $\alpha >0$, we have that uniformly over $x\in S$, $\P(G^\alpha_\eps(x)) \ge 1- p(\eps_0)$ where the function $p$ may depend on $\alpha$ and for a fixed $\alpha > \gamma$, $p(\eps_0) \to 0$ as $\eps_0 \to 0$. \end{lemma}

\begin{remark}
The proof of this lemma is trivial in the case of the two-dimensional GFF (in particular no general machinery about Gaussian processes is needed in this case).
\end{remark}

\begin{proof}
Set $X_t = h_\eps(x)$ for $\eps = e^{- t}$. Then a direct computation from \eqref{cov} (see below in Lemma \ref{L:cov}, and more precisely \eqref{eq:roughcov}), implies that
\begin{equation}\label{covbound}
|\cov(X_s, X_t) - s\wedge t| \le O(1),
\end{equation}
 where the implicit constant is uniform.
In particular $\var (X_t ) = t + O(1)$.

Note that for each $k\ge 1$, $\P( X_k \ge \alpha k /2) \le e^{- \alpha^2 k^2/ ( 8\var (X_k))} $ which decays exponentially in $k$ by the above, and so is smaller than $Ce^{- \lambda k}$ for some $\lambda>0$.
Hence
$$
\P( \exists k \ge k_0:|X_k | \ge \alpha k) \le \sum_{k \ge k_0} Ce^{- \lambda k }
$$
We call $p(\eps_0) $ to be the right hand side of the above for $k_0 = \lceil-\log(1/\eps_0)\rceil $ which can be made arbitrarily small by picking $\eps_0$ small enough.
This proves the lemma.
\end{proof}

\begin{lemma}[Liouville points are no more than $\gamma$-thick]  \label{L:Liouvillethick} For $\alpha > \gamma$ we have
$$
\E(e^{\bar h_\eps(x) } 1_{G^\alpha_\eps(x)} ) \ge 1- p(\eps_0).
$$
\end{lemma}

\begin{proof}
Note that
$$
\E( e^{ \bar h_\eps(x)}  \indic{G^\alpha_\eps(x) }) = \tilde \P (  G^\alpha_\eps(x)), \text{ where } \frac{d\tilde \P}{d\P} =  e^{\bar h_\eps(x)}.
$$
By the Cameron--Martin--Girsanov lemma, under $\tilde \P$, the process $(X_s)_{- \log \eps_0 \le s\le t }$ has the same covariance structure as under $\P$ and its mean is now $\gamma \cov(X_s, X_t) = \gamma s + O(1) $ for $ s\le t$. Hence
$$
\tilde \P( G^\alpha_\eps(x))  \ge \P (  G^{\alpha - \gamma }_\eps(x) ) \ge 1-p(\eps_0)
$$
by Lemma \ref{L:typicalthick} since $\alpha > \gamma$.
\end{proof}

We therefore see that points which are more than $\gamma$-thick do not contribute significantly to $I_\eps$ in expectation and can therefore be safely removed. We therefore fix $\alpha >\gamma$ and introduce:
\begin{equation}
J_\eps = \int_S  e^{ \bar h_\eps(z)} \indic{G_\eps(z)}\sigma(dz)
\end{equation}
with $G_\eps(x) = G^\alpha_\eps(x)$. We will show that $J_\eps$ is uniformly integrable from which the result follows.

Before we embark on the main argument of the proof, we record here for ease of reference an elementary estimate on the covariance structure of $h_\eps(x)$. Roughly speaking, the role of the first estimate \eqref{eq:roughcov} is to bound from above (up to an unimportant constant of the form $e^{O(1)}$) the contribution to $\E(J_\eps^2)$ coming from points $x,y$ that are close to each other. That will suffice to prove uniform integrability. The role of the finer estimate \eqref{eq:finecov} is to get a more precise estimate to the contribution to $\E( J_\eps^2)$ coming from points $x,y$ which are macroscopically far away, which we will be able to assume thanks to \eqref{eq:roughcov}. This time the error in the covariance up to an additive term $o(1)$ will translate into an error up to a factor $e^{o(1)} = 1+ o(1)$ in the estimation of this contribution. In turn this will imply convergence.

\begin{lemma}
  \label{L:cov} We have the following estimate:
  \begin{equation}\label{eq:roughcov}
  \cov( h_\eps(x), h_r(y)) = \log 1/(|x-y|\vee r \vee \eps) + O(1).
  \end{equation}
  Moreover, if $\eta>0$ and $|x-y| \ge \eta$, then
  \begin{equation}\label{eq:finecov}
  \cov( h_\eps(x), h_\delta(y)) = \log (1/|x-y|) + g(x,y) + o(1)
  \end{equation}
  where $o(1)$ tends to 0 as $\delta, \eps \to 0$, uniformly in $|x- y | \ge \eta$.
\end{lemma}

\begin{proof}
  We start with the proof of \eqref{eq:roughcov}. Assume without loss of generality that $\eps \le r$. Note that
  \begin{align}
  \cov( h_\eps(x), h_r(y)) &= \iint K(z,w) \theta_{x, \eps}(dw) \theta_{y,r}(dz)\nonumber\\
  & = \iint -\log (| w-z |)  \theta_{x,\eps}(dw) \theta_{y,r}(dz) + O(1)\label{roughcov1}
  \end{align}
  We consider the following cases: (a) $ r \le |x-y |/3$, and (b) $r \ge | x-y | /3$.

  In case (a),  $|x-y| \le \eps + |w-z| +r \le 2 r + |w-z| \le (2/3) |x-y| + |w-z|$ by the triangle inequality, so $|w-z| \ge (1/3) | x-y|$ and we get
   $$
    \cov( h_\eps(x), h_r(y)) \le - \log |x-y| + O(1)
   $$
    as desired in this case.

  The second case (b) is when $r \ge |x-y|/3$. Then by translation and scaling so that $B(y,r)$ becomes $B(0,1)$, the right hand side of \eqref{roughcov1} is equal to
  $$
  \log (1/r) + \iint -\log |w-z| \theta_{\frac{x-y}{r}, \frac{\eps}{r}} (dw) \theta(dz)
  $$
Conditioning on $w$ (which is necessarily in $\bar B(0,4)$ under the assumptions of case (b)), we see that by the assumption \eqref{eq:assumptiontheta} on $\theta$, the second term is bounded by $O(1)$, uniformly, so that
$$
    \cov( h_\eps(x), h_r(y)) \le - \log r + O(1)
   $$
   as desired in this case. This proves \eqref{eq:roughcov}.

  The proof of \eqref{eq:finecov} is similar but simpler. Indeed, we get (as in \eqref{roughcov1}),
  \begin{equation}\label{finecov1}
  \cov(h_\eps(x), h_\delta(y)) = \iint - \log | w-z| \theta_{x,\eps}(dw) \theta_{y, \delta}(dz) + g(x,y) + o(1)
  \end{equation}
  where the $o(1)$ term tends to 0 as $\eps, \delta \to 0$, coming from the continuity of $g$, and hence is uniform in $x,y$ (not even assuming $|x-y | \ge \eta$). Now note that
  $$
  \big| \log | w- z | - \log |x- y | \big| \le  \frac{4 \max(\eps, \delta)}{|x-y|}
  $$
  as soon as $\max(\eps, \delta ) \le \eta/4 \le |x-y |/4$. Therefore the right hand side of \eqref{finecov1} is $- \log | x- y |  +  g(x,y) + O( \max (\eps, \delta)) + o(1) $ when $|x-y | \ge \eta$, which proves the claim \eqref{eq:finecov}.
\end{proof}


\begin{lemma}
\label{L:UI}
For $\alpha>\gamma$ sufficiently close to $\gamma$, $J_\eps$ is bounded in $L^2(\P)$ and hence uniformly integrable.
\end{lemma}

\begin{proof} By Fubini's theorem,
\begin{align*}
\E (J_\eps^2) & = \int_{S\times S} \E( e^{\bar h_\eps(x) + \bar h_\eps(y)}\indic{G_\eps(x) \cap G_\eps(y)})  \sigma(dx )\sigma(dy)\\
& = \int_{S\times S}   e^{\gamma^2 \cov (h_\eps(x), h_\eps(y))}
\tilde \P ( G_\eps(x)\cap  G_\eps(y)) \sigma(dx)\sigma( dy)
\end{align*}
where $\tilde \P$ is a new probability measure obtained by the Radon-Nikodyn derivative
$$
\frac{d\tilde \P}{d\P} = \frac{e^{\bar  h_\eps(x) + \bar h_\eps(y)}}{\E(e^{\bar  h_\eps(x) + \bar h_\eps (y)}) }.
$$
By Lemma \ref{L:cov} (more precisely by \eqref{eq:roughcov})
\begin{equation}\label{twopointcorr}
\cov(h_\eps(x), h_\eps(y) ) =  - \log(|x-y| \vee \eps) + g(x,y) + O(1).
\end{equation}
Also, if and $\eps \le e^{-1} \eps_0$ and $|x- y | \le e^{-1} \eps_0  $ (else we bound the probability below by one), we have
$$
\tilde \P ( G_\eps(x) \cap  G_\eps(y)) \le \tilde \P ( h_{r} (x) \le \alpha \log 1/r)
$$
where
\begin{equation}\label{eps'}
r = \overline{\eps \vee |x-y|}
\end{equation}
(recall our notation for $\bar r = \inf\{ e^k: k \in \Z, e^k > r\}$, see \eqref{rbar}.)
Furthermore, by Cameron--Martin--Girsanov, under $\tilde \P$ we have that $h_r(x)$ has the same variance as before (therefore $\log 1/r + O(1)$) and a mean given by
\begin{equation}\label{twopointcorr2}
\cov_{\P} ( h_{r}(x), \gamma h_\eps(x) + \gamma h_\eps(y)) = 2\gamma \log 1/r + O(1),
\end{equation}
again by Lemma \ref{L:cov} (more precisely, by \eqref{eq:roughcov}).
Consequently,
\begin{align}
 \tilde \P ( h_{r} (x) \le \alpha \log 1/r) & = \P( \cN( 2\gamma \log (1/r), \log 1/r) \le \alpha \log (1/r) + O(1)) \nonumber \\
& \le \exp ( - \frac12 (2\gamma - \alpha)^2 (\log (1/r) + O(1))) = O(1) r^{(2\gamma - \alpha)^2/2}.\label{corr_naive}
\end{align}
We deduce
\begin{align}
\E(J_\eps^2) & \le O(1) \int_{S\times S} |(x-y )\vee \eps|^{ ( 2\gamma - \alpha)^2/2-\gamma^2}  \sigma(dx )\sigma(dy).\label{J2naive}
\end{align}
(We will get a better approximation in the next section).
Clearly by \eqref{dim} this is bounded if
$$
( 2\gamma - \alpha)^2/2-\gamma^2 > - d
$$
and since $\alpha$ can be chosen arbitrarily close to $\gamma$ this is possible if
\begin{equation}\label{UI}
d - \gamma^2 /2 >0 \text{ or } \gamma < \sqrt{2d}.
\end{equation}
This proves the lemma.
\end{proof}

To finish the proof of Proposition \ref{P:UI}, observe that $I_\eps = J_\eps + J'_\eps$.
We have $\E(J'_\eps) \le p(\eps_0)$ by Lemma \ref{L:Liouvillethick},  and for a fixed $\eps_0$, $J_\eps$ is bounded in $L^2$ (uniformly in $\eps$). Hence $I_\eps$ is uniformly integrable.
\end{proof}

\section{Convergence}

As before, since $\E(J'_\eps)$ can be made arbitrarily small by choosing $\eps_0$ sufficiently small, it suffices to show that $J_\eps$ converges in probability and in $L^1$. In fact we will show that it converges in $L^2$, from which convergence will follow. To do this we will show that $(J_\eps)$ forms a Cauchy sequence in $L^2$, and we start by writing
\begin{equation}\label{basic}
\E( (J_\eps - J_\delta)^2 ) = \E(J_\eps^2 ) + \E ( J_\delta^2) - 2 \E(J_\eps J_\delta)
\end{equation}

Our basic approach is thus to estimate better than before $\E(J_\eps^2)$ from above and $\E(J_\eps J_\delta)$ from below. Essentially, the idea is that for $x,y$ which are at a small but macroscopic distance, we can identify the limiting distribution of $(h_{r}(x), h_{r}(y))_{r\le \eps_0}$ under the distribution $\P$ biased by $e^{\bar h_\eps(x) + \bar h_\delta(y)}$. On the other hand when $x,y$ are closer than that we know from the previous section that the contribution is essentially negligible.

\begin{lemma}
\label{L:limsup}
We have
$$
\limsup_{\eps\to 0} \E( J_\eps^2) \le \int_{S\times S} e^{\gamma^2 g(x,y)} \frac1{|x-y|^{\gamma^2}}  g_\alpha(x,y)\sigma(dx)\sigma( dy)
$$
where $g_\alpha(x,y)$ is a nonnegative function depending on $\alpha, \eps_0$ and $\gamma$ such that the above integral is finite.
\end{lemma}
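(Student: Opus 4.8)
The plan is to revisit the exact second-moment identity established inside the proof of Lemma \ref{L:UI},
\[
\E(J_\eps^2) = \int_{S\times S} e^{\gamma^2 \cov(h_\eps(x), h_\eps(y))}\, \tilde\P(G_\eps(x), G_\eps(y))\, \sigma(dx)\sigma(dy),
\]
and to pass to the limit $\eps\to 0$ inside the integral, this time retaining the probability factor instead of discarding it through the crude tail estimate \eqref{corr_naive}. The prefactor is easy: by the two-point asymptotics \eqref{twopointcorr}, for every $x\ne y$,
\[
e^{\gamma^2 \cov(h_\eps(x), h_\eps(y))} \longrightarrow e^{\gamma^2 g(x,y)}\,|x-y|^{-\gamma^2} \qquad (\eps\to 0).
\]
The real content is to identify the limit of $\tilde\P(G_\eps(x), G_\eps(y))$, which will become $g_\alpha(x,y)$.

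To do so I would move to the logarithmic scale $t=\log(1/\eps')$, writing $X^x_t = h_{\eps'}(x)$ and $X^y_t=h_{\eps'}(y)$, and set $\rho = \log(1/|x-y|)$ for the separation scale. Under $\tilde\P$ the pair $(X^x,X^y)$ is still Gaussian with the same covariance as under $\P$ (Girsanov changes only the mean), and from \eqref{covbound} and \eqref{twopointcorr} this covariance is $\cov(X^\bullet_s, X^\bullet_t) = s\wedge t + O(1)$ and $\cov(X^x_s, X^y_t) = (s\wedge t\wedge \rho) + O(1)$. The mean shift is exactly the computation behind \eqref{twopointcorr2}: the $\tilde\P$-mean of $X^x_t$ equals $\gamma (t\wedge T) + \gamma(t\wedge T\wedge \rho)$, with $T=\log(1/\eps)$, and symmetrically for $X^y_t$. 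The key observation is that for each fixed $t$ these means and covariances stabilise as $T\to\infty$, so on any finite window $t\in[t_0,t_1]$ (with $t_0=\log(1/\eps_0)$) the biased process converges in law to an explicit limiting Gaussian process, which below the separation scale $\rho$ drifts at rate $2\gamma$ and above it at rate $\gamma$. Since $\alpha>\gamma$, above scale $\rho$ the barrier $t\mapsto \alpha t$ lies strictly above the mean, so the constrained event does not degenerate as the window grows. I would therefore define
\[
g_\alpha(x,y) = \lim_{\eps\to 0} \tilde\P(G_\eps(x), G_\eps(y)),
\]
obtaining it by first sending $\eps\to 0$ on the fixed window $[t_0,t_1]$ and then sending $t_1\to\infty$ through the decreasing family of events by monotone convergence.

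With this pointwise limit I would conclude by reverse Fatou. The crude bound inside the proof of Lemma \ref{L:UI}, namely $\tilde\P(G_\eps(x), G_\eps(y)) \le O(1)\,(\eps\vee|x-y|/3)^{(2\gamma-\alpha)^2/2}$ together with $\cov(h_\eps(x),h_\eps(y)) \le -\log(|x-y|\vee\eps)+O(1)$, dominates the integrand by $O(1)\,|x-y|^{(2\gamma-\alpha)^2/2-\gamma^2}$, which by the dimension hypothesis \eqref{dim} is integrable against $\sigma\otimes\sigma$ exactly when $\gamma<\sqrt{2d}$, upon choosing $\alpha$ close enough to $\gamma$ (the same exponent bookkeeping as in \eqref{J2naive}). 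Reverse Fatou then gives $\limsup_{\eps\to0}\E(J_\eps^2)\le \int_{S\times S} e^{\gamma^2 g(x,y)}|x-y|^{-\gamma^2} g_\alpha(x,y)\,\sigma(dx)\sigma(dy)$; and taking the $\limsup$ of the same crude bound shows $g_\alpha(x,y)\le O(1)|x-y|^{(2\gamma-\alpha)^2/2}$, so the limiting integral is finite by the identical estimate.

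The step I expect to be the main obstacle is the identification and convergence of the barrier probability $\tilde\P(G_\eps(x), G_\eps(y))$: one must control the $O(1)$ errors in the covariance uniformly across scales, justify interchanging $\eps\to 0$ with the removal of the truncation $t_1\to\infty$, and verify that the limiting drifted process really does stay below the barrier with positive probability off the diagonal, so that $g_\alpha$ is well defined and genuinely captures the limit (and not merely a bound). The decoupling of $X^x$ and $X^y$ beyond the separation scale $\rho$—the feature that makes the two-point problem tractable—is precisely where the geometry of the pair $(x,y)$ enters and requires care.
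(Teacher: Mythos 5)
Your proposal is correct and follows essentially the same route as the paper: identify the pointwise limit $g_\alpha(x,y)$ of $\tilde \P(G_\eps(x), G_\eps(y))$ via Girsanov and the stabilisation of the covariances \eqref{shift1}--\eqref{shift2}, then pass to the limit in the second-moment integral using the crude bound \eqref{corr_naive} as an integrable majorant. The only cosmetic difference is that you invoke reverse Fatou with that dominating function, where the paper instead truncates at $|x-y|\ge \eta$, uses uniform convergence off the diagonal, and lets $\eta \to 0$; the two devices close the argument identically.
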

\begin{proof}
Recall that from \eqref{twopointcorr} we already know
$$
\E(J_\eps^2) = \int_{S^2}  e^{\gamma^2  \cov( h_\eps(x), h_\eps(y))} \tilde \P( G_\eps(x)\cap G_\eps(y)) \sigma(dx)\sigma( dy).
$$
We simply have to estimate better $\tilde \P( G_\eps(x)\cap  G_\eps(y)  ) $. We fix $\eta >0$ arbitrarily small (in particular, $\eta$ may and will be smaller than $e^{-1}\eps_0$). If $|x-y| \le \eta$ we use the same bound as in \eqref{J2naive}. The contribution coming from the part $|x-y| \le \eta$ can thus be bounded, uniformly in $\eps$, by $f(\eta) $ (where $f(\eta) \to 0$ as $\eta \to 0$ and the precise order of magnitude of $f(\eta)$ is determined by \eqref{dim}, and is at most polynomial in $\eta$). We thus focus on the contribution coming from $|x- y| \ge \eta$.

Then observe that for any fixed $\eps_1 \le \eps_0$, as $\eps \to 0$, and uniformly over $x \in S$ and $r \ge \eps_1$,
\begin{equation}\label{shift1}
\cov( h_{r}(x), h_\eps(x) ) \to \int_D K(x,z) \theta_{r}(x-z) dz
\end{equation}
and likewise, uniformly over $x, y \in S$ such that $|x-y|\ge \eta$, and over $r \ge \eps_1$, as $\eps \to 0$:
\begin{equation}\label{shift2}
\cov(h_{r}(x), h_\eps(y)) \to \int_D K(z,y) \theta_{r}(x-z) dz
\end{equation}
(Note that both right hand sides of \eqref{shift1} and \eqref{shift2} are finite by \eqref{eq:roughcov}.) Consequently, by Cameron--Martin--Girsanov, the joint law of the processes $(h_{r}(x), h_{r}(y))_{r\le \eps_0}$ under $\tilde \P$ converges to a joint distribution
$(\tilde h_{r}(x), \tilde h_{r}(y))_{r \le \eps_0}$ whose covariance is unchanged and whose mean is given by the sum of \eqref{shift1} and \eqref{shift2} times $\gamma$. This convergence is for the weak convergence on compacts of $r \in (0, \eps_0]$, and is uniformly in $|x-y| \ge \eta$.
Let $\tilde G(x) $ be the event that $\tilde h_{r}(x)  \le \alpha \log (1/r)$ for all $r \le \eps_0$. Then it is not hard to deduce, uniformly in $|x- y | \ge \eta$,
\begin{equation}\label{corr_better}
\tilde \P( G_\eps(x) \cap G_\eps(y)) \to g_\alpha(x,y): = \P( \tilde G (x) \cap  \tilde G(y))\ \  (\eps \to 0) .
\end{equation}
Indeed, by \eqref{eq:roughcov}, under $\tilde \P$, the drifts of $h_r(x)$ and of $h_r(y)$ (with $ r \ge \eps$) are each $\gamma \log (1/r) + O(1)$ where the $O(1)$ term is uniform in $|x-y | \ge \eta$. Because of this, up to an error in $\tilde \P$ probability that is arbitrarily small (uniformly in $|x-y | \ge \eta$), the events $G_\eps(x), G_\eps(y)$ as well as $\tilde G(x), \tilde G(y)$ depend only on the ``macroscopic" behaviour of $h_r(x)$ and $h_r(y)$; that is, depend only on $(h_r(x), h_r(y))_{r \ge \eps_1}$ for some $\eps_1$.
Consequently, as $\eps \to 0$, after applying Lemma \ref{L:cov} (and more specifically \eqref{eq:finecov}), we deduce (using \eqref{J2naive} to justify the use of dominated convergence):
\begin{equation}\label{convJ2}
\int_{S^2; |x- y|\ge \eta} \!\!\!\!\!\!\!\!\!\!\!\!\!\!\!\!{e^{\gamma^2 \cov( h_\eps(x), h_\eps(y))}} \tilde \P( G_\eps(x), G_\eps(y))\sigma( dx ) \sigma( dy )\to \int_{S^2 ; |x- y | \ge \eta} \frac{e^{ \gamma^2 g(x,y)}}{|x-y|^{\gamma^2}} g_\alpha (x,y) \sigma(dx) \sigma(dy).
\end{equation}
Since we already know that the piece of the integral coming from $|x- y | \le \eta$ contributes at most $f(\eta) \to 0$ when $\eta \to 0$, it remains to check that the integral on the right hand side of \eqref{convJ2} remains finite as $\eta \to 0$. But we have already seen in \eqref{corr_naive} that for $|x- y | \le \eps_0/3$, $\tilde \P( G_\eps(x) \cap  G_\eps(y)) \le O(1) |x-y|^{( 2\gamma - \alpha)^2/2-\gamma^2}$; hence this inequality must also hold for $g_\alpha (x,y)$. Hence the result follows as in \eqref{UI}.
\end{proof}

\begin{lemma}
\label{L:liminf}
We have
$$
\liminf_{\eps,\delta \to 0} \E( J_\eps J_\delta ) \ge \int_{S\times S} e^{\gamma^2 g(x,y)} \frac1{|x-y|^{\gamma^2}}  g_\alpha(x,y) \sigma(dx) \sigma( dy).
$$
\end{lemma}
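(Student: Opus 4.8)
The plan is to mirror the proof of \Cref{L:limsup} but now reasoning about the \emph{cross} term $\E(J_\eps J_\delta)$, and to produce a matching \emph{lower} bound. Writing out the expectation by Fubini,
\begin{equation}\label{Jcross}
\E(J_\eps J_\delta) = \int_{S\times S} e^{\gamma^2 \cov(h_\eps(x), h_\delta(y))} \tilde\P(G_\eps(x), G_\delta(y))\, \sigma(dx)\sigma(dy),
\end{equation}
where now $\tilde\P$ is the measure with Radon--Nikodym derivative proportional to $e^{\bar h_\eps(x) + \bar h_\delta(y)}$. The key point is that the two regularisation scales $\eps$ and $\delta$ appear in the two separate arguments $x$ and $y$, but as both tend to $0$ the relevant covariances converge to exactly the same limiting quantities that appeared in \eqref{shift1}--\eqref{shift2}. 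Thus on the region $|x-y| \ge \eta$, the prefactor $e^{\gamma^2 \cov(h_\eps(x), h_\delta(y))}$ converges to $e^{\gamma^2 g(x,y)}/|x-y|^{\gamma^2}$ by the analogue of \eqref{twopointcorr}, and by Girsanov the biased law of the full processes $(h_{\eps'}(x), h_{\eps'}(y))_{\eps'\le\eps_0}$ converges to the \emph{same} limiting Gaussian process with mean-shift given by $\gamma$ times \eqref{shift1} plus \eqref{shift2}. Hence $\tilde\P(G_\eps(x), G_\delta(y)) \to g_\alpha(x,y)$, the identical limit obtained in \eqref{corr_better}.

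The subtle asymmetry is that here I only need a \emph{lower} bound, so I may safely discard the contribution from small $|x-y| \le \eta$, which is nonnegative. First I would restrict the integral in \eqref{Jcross} to $|x-y| \ge \eta$; since the integrand is nonnegative this only decreases the value, giving $\liminf_{\eps,\delta\to 0}\E(J_\eps J_\delta) \ge \liminf$ of the restricted integral. On that region the convergence of both the prefactor and the probability is uniform in $|x-y|\ge\eta$, exactly as in \eqref{convJ2}, so the restricted integral converges to
\begin{equation*}
\int_{S^2;\, |x-y|\ge\eta} \frac{e^{\gamma^2 g(x,y)}}{|x-y|^{\gamma^2}} g_\alpha(x,y)\, dx\, dy.
\end{equation*}
Finally I let $\eta \to 0$: the integrand is nonnegative and we already know from the end of \Cref{L:limsup} that it is integrable over all of $S^2$ (because $g_\alpha(x,y) \le |x-y|^{(2\gamma-\alpha)^2/2 - \gamma^2}$), so by monotone convergence the restricted integral increases to the full integral over $S^2$, yielding the claimed lower bound.

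The one point requiring genuine care — and the main obstacle — is justifying that the joint convergence under the biased measure $\tilde\P$ really holds uniformly over $|x-y|\ge\eta$ when the two arguments carry \emph{different} regularisation scales $\eps$ and $\delta$. In \Cref{L:limsup} both points used the common scale $\eps$, so the Girsanov mean-shift came from a single field $e^{\bar h_\eps(x)+\bar h_\eps(y)}$; here the tilting field is $e^{\bar h_\eps(x)+\bar h_\delta(y)}$ and I must verify that $\cov(h_{\eps'}(x), h_\eps(x))$ and $\cov(h_{\eps'}(x), h_\delta(y))$ converge (as $\eps,\delta\to 0$ with $\eps'\le\eps_0$ fixed) to the same limits \eqref{shift1} and \eqref{shift2} as in the diagonal case. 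This is where \eqref{cov} and the smoothness of $g$ enter: for fixed $\eps'$ and $|x-y|\ge\eta$ bounded away from the diagonal, the dependence on the \emph{second} mollification scale washes out in the limit because $\theta_\eps(\cdot - x) \to \delta_x$ weakly, so both double scales contribute the same single limiting mean-shift. Once this uniform convergence of the biased laws is in hand, the rest is the same dominated/monotone convergence bookkeeping as before, and the identical limiting integrand $g_\alpha(x,y)$ is precisely what makes \eqref{basic} collapse so that $\E((J_\eps - J_\delta)^2)\to 0$.
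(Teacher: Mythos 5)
Your proposal is correct and follows essentially the same route as the paper: tilt by $e^{\bar h_\eps(x)+\bar h_\delta(y)}$, observe that the Girsanov mean-shift and covariance asymptotics leading to \eqref{corr_better} are unaffected by the two scales being different, restrict to $|x-y|\ge\eta$ to get a lower bound, and let $\eta\to 0$. The paper states this more tersely, but your filled-in details (in particular the monotone convergence step and the verification that the limiting $g_\alpha$ is the same) are exactly the intended argument.
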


\begin{proof}
In fact, the proof is almost exactly the same as in Lemma \ref{L:limsup}, except that
$\tilde \P$ is now weighted by $e^{\bar h_\eps(x) + \bar h_\delta(y)}$ instead of $e^{\bar h_\eps(x) + \bar h_\eps(y)}$.  But this changes nothing to the argument leading up to \eqref{corr_better} and hence \eqref{convJ2} still holds. Since we get a lower bound by restricting ourselves to $|x- y |  \ge \eta$, we deduce immediately that
$$
\liminf_{\eps,\delta \to 0} \E( J_\eps J_\delta ) \ge \int_{S^2; | x- y  | \ge \eta} e^{\gamma^2 g(x,y)} \frac1{|x-y|^{\gamma^2}}  g_\alpha(x,y) \sigma(dx) \sigma( dy).
$$
Since $\eta$ is arbitrary, the result follows.
\end{proof}
\begin{proof}[Proof of convergence in Theorem \ref{T:conv}]
Using \eqref{basic} together with Lemmas \ref{L:limsup}, \ref{L:liminf}, we see that $J_\eps$ is a Cauchy sequence in $L^2$ for any $\eps_0>0$.
Combining with Lemma \ref{L:Liouvillethick}, it therefore follows that $I_\eps$ is a Cauchy sequence in $L^1$ and hence converges in $L^1$ (and also in probability) to a limit $I = \mu(S)$.
 \end{proof}

\begin{remark}
Note that $\lim_{\eps\to 0} \E(J_\eps^2)$ depends on the regularisation $\theta$,
even though, as we will see next, $\lim_{\eps \to 0} I_\eps$ does not.
\end{remark}

\section{Uniqueness of the limit}

For the proof of independence of the limit with respect to the regularising kernel $\theta$ we may assume without loss of generality that $D$ is bounded.

\begin{lemma}
\label{L:expansion}
We may write
$
h= \sum_{n=0}^\infty h_n
$
where the $h_n$ are independent continuous Gaussian fields, in the sense that for an arbitrary fixed function $f \in L^2(D,dx)$, $\sum_{n=0}^\infty (h_n, f) $ converges almost surely and the limit agrees with $(h,f)$ almost surely.
\end{lemma}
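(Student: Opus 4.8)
The plan is to realise the decomposition as a Karhunen--Loève expansion of $h$, obtained from the spectral decomposition of its covariance operator. Since we have reduced to the case where $D$ is bounded, consider the operator $T$ on $L^2(D,dx)$ defined by $Tf(x) = \int_D K(x,y) f(y)\,dy$. The first step is to check that $T$ is a well-defined, self-adjoint, nonnegative Hilbert--Schmidt operator. Nonnegativity and self-adjointness are immediate from \eqref{cov}, since $\langle Tf,f\rangle = \iint f(x)K(x,y)f(y)\,dx\,dy = \var((h,f)) \ge 0$; the Hilbert--Schmidt property follows once one observes that $K \in L^2(D\times D)$, which holds because the logarithmic singularity in \eqref{cov} is square-integrable (i.e. $\iint (\log|x-y|^{-1})^2\,dx\,dy < \infty$ on the bounded set $D$) and $g$ is bounded. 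The spectral theorem then provides an orthonormal basis $(e_i)_{i\ge 0}$ of $L^2(D)$ consisting of eigenfunctions of $T$, with eigenvalues $\lambda_i \ge 0$ satisfying $\sum_i \lambda_i^2 < \infty$.

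The key analytic point---and the step I expect to be the main obstacle---is that each eigenfunction $e_i$ with $\lambda_i>0$ admits a continuous representative. This is where the logarithmic singularity must be handled: writing $e_i = \lambda_i^{-1} T e_i$, the contribution of the smooth part $g$ to $Te_i$ is manifestly continuous, while the logarithmic part is the potential $x \mapsto \int_D \log(|x-y|^{-1}) e_i(y)\,dy$. Restricting the kernel to a ball containing all differences $x-y$ with $x,y\in D$ (legitimate since $D$ is bounded), this is a convolution of two $L^2$ functions---the localised kernel $\log(|\cdot|^{-1})$ being square-integrable near the origin---and is therefore continuous. Choosing these continuous representatives, I set $\xi_i = \lambda_i^{-1/2}(h,e_i)$ for $\lambda_i>0$. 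Since $\cov((h,e_i),(h,e_j)) = \langle Te_i,e_j\rangle = \lambda_i\delta_{ij}$ and the collection $((h,e_i))_i$ is jointly Gaussian, the $\xi_i$ are i.i.d.\ standard Gaussians. I then define $h_i(x) = (h,e_i)\,e_i(x) = \sqrt{\lambda_i}\,\xi_i\,e_i(x)$, which is a continuous field, and the $h_i$ are independent because the $\xi_i$ are.

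It remains to prove convergence and to identify the limit. For fixed $f \in L^2(D)$, the series $\sum_i (h_i,f) = \sum_i \sqrt{\lambda_i}\,\xi_i\,(e_i,f)$ is a sum of independent centred Gaussian variables whose variances sum to $\sum_i \lambda_i (e_i,f)^2 = \langle Tf,f\rangle = \var((h,f)) < \infty$; by Kolmogorov's theorem (equivalently, $L^2$-martingale convergence) it converges almost surely and in $L^2$. To see that the almost sure limit is $(h,f)$, it suffices to identify the $L^2$ limit. Decompose $f = f' + f''$ with $f'' \in \ker T$ and $f'$ in the closed range of $T$; then $\var((h,f'')) = \langle Tf'',f''\rangle = 0$, so $(h,f'')=0$ almost surely and $(e_i,f'')=0$ for all $i$, and we may assume $f = f'$. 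Writing $P_N$ for the orthogonal projection onto $\mathrm{span}(e_0,\dots,e_N)$, the remainder $(h,f) - \sum_{i\le N}\sqrt{\lambda_i}\,\xi_i\,(e_i,f) = (h, f - P_N f)$ has variance $\langle T(f-P_Nf), f-P_Nf\rangle \le \|T\|\,\|f-P_Nf\|_{L^2}^2 \to 0$ as $N\to\infty$, since $f-P_Nf \to 0$ in $L^2$. Hence the $L^2$ limit of the partial sums equals $(h,f)$, and it must coincide with the almost sure limit, which proves the lemma.
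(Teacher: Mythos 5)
Your proposal is correct and follows essentially the same route as the paper: a Karhunen--Lo\`eve expansion obtained from the spectral decomposition of the covariance operator $T$, with continuity of the eigenfunctions extracted from $e_i=\lambda_i^{-1}Te_i$, and identification of the limit via the vanishing variance of $(h,f-P_Nf)$. The only (harmless) differences are cosmetic --- you get compactness from the Hilbert--Schmidt property rather than equicontinuity and Arzel\`a--Ascoli, and you treat $\ker T$ explicitly, which is a slightly cleaner way to close the identification step.
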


\begin{remark}
  In the case of the Gaussian free field, we can write $h = \sum_i X_i f_i$, where $f_i$ is an orthonormal basis of the Sobolev space $H_0^1(D)$, and $X_i$ are i.i.d. standard normal random variables, hence we can take $h_n = X_n f_n$ in this case.
\end{remark}

\begin{proof}
This is basically the Karhunen-Loeve expansion of $h$ (see \cite{oz}). Since $h$ is only a stochastic process indexed by $\cM$ we do it carefully. Introduce the Fredholm integral operator
$$
Tf(x) = \int_D K(x,y)f(y) dy,\ \  f \in L^2(D;dy).
$$
Note that $T$ is well defined on $L^2(D,dy)$ and maps $L^2(D,dy)$ into continuous functions on $\bar D$ by Lebesgue's dominated convergence theorem and Cauchy--Schwarz. Since $D$ is bounded, we deduce that $T:L^2(D,dy) \to L^2(D,dy)$. Note further that since $K(x,y) = K(y,x)$ by assumption, $T$ is symmetric with respect to the (Lebesgue) inner product on $L^2(D)$. Observe also that since $K(x,y) \in L^2(D\times D,dxdy)$, we have that $T$ is a compact operator on $L^2(D,dy)$ (follows from equicontinuity and Arzela--Ascoli). By the spectral theorem for compact symmetric operators, we deduce that there exists an orthonormal basis of $L^2(D,dy)$ consisting of eigenfunctions $\{f_k\}_{k\ge 0}$ of $T$ (Theorem 7 in Appendix D.5 of \cite{Evans}). Let $\lambda_k$ denote the corresponding eigenvalue. We have that $\lambda_k > 0$ and $\lambda_k \to 0$ as $k \to \infty$ by the same theorem. Observe that trivially $f_k$ must be continuous since $Tf_k = \lambda_k f_k$ and $f_k \in L^2$ so $Tf_k$ is continuous.

Now consider our field $h$. Observe that $\xi_k = (h,f_k) / \sqrt{\lambda_k}$ is well defined (by Cauchy--Schwarz, since $f_k \in L^2(D)$) and forms an i.i.d. sequence of standard Gaussian random variables:
\begin{align*}
\E( \xi_k \xi_j)& = \frac1{\sqrt{\lambda_k \lambda j}} \iint K(x,y) f_k(x) f_j(y)dx dy = \frac1{\sqrt{\lambda_k \lambda j}}  \int f_k(x) Tf_j(x)dx  \\
& = \frac{\lambda_j}{\sqrt{\lambda_k \lambda_j}} (f_k, f_j) = \indic{j=k}.
\end{align*}

Set $h_k(x): =\sqrt{\lambda_k}  \xi_k f_k(x)$. Note that $h_k$ is then a.s. continuous and in $L^2(D;dx)$. Observe that for an arbitrary test function $f \in L^2(D,dy)$, the sequence $M_n(f) = \sum_{k=0}^n (h_k, f)$ defines a martingale in the filtration generated by $(\xi_1, \ldots)$. Note that by independence of the $(\xi_k)_{k\ge 0}$, and by Parseval's identity, as $n \to \infty$,
\begin{align*}
\var M_n(f) = \sum_{k=0}^n \lambda_k (f, f_k)^2  & \to \sum_{k=0}^\infty \lambda_k (f,f_k) \cdot (f,f_k) = \sum_{k=0}^\infty (Tf, f_k) (f, f_k) = (Tf, f) \\
& = \var (h,f) < \infty
\end{align*}
Hence $M_n(f) $ converges a.s. and in $L^2(\P)$. Moreover the same calculation shows that in fact
$\var [(h,f) - \sum_{k=0}^n (h_k ,f)]$
converges to 0: indeed, as $n\to \infty$,
\begin{align*}
  \cov [(h,f) ; \sum_{k=0}^n (h_k ,f)] & = \sum_{k=0}^n \cov \Big((h,k) ; \int_D h_k(x) f(x) dx\Big)\\
  &
 = \sum_{k=0}^n \int_D f(x) \cov \Big((h, f); (h, f_k) f_k(x)\Big) dx\\
  & = \sum_{k=0}^n \int_D f(x) f_k(x) (f, Tf_k) dx = \sum_{k=0}^n \lambda_k (f, f_k)^2 \to \var (h,f).
\end{align*}
 Hence the almost sure limit of $M_n(f)$ agrees with $(h,f)$, as desired.
\end{proof}

Now define $h^n (z) = \sum_{k=0}^n h_k(z)$ and set
$$
\mu^n(S) = \int_S\exp\left( \gamma h^n(z) - \frac{\gamma^2}{2} \var (h^n(z)) \right) \sigma(dz).
$$
Then $\mu^n(S)$ is a positive martingale with respect to the filtration $\cF_n = \sigma(\xi_1, \ldots, \xi_n)$ so converges to a limit which we will call $\mu'(S)$.

\begin{proof}[Proof of Theorem \ref{T:conv}; uniqueness.] It suffices to show that $\mu(S) = \mu'(S)$. This will show that $\mu(S)$ does not depend on the regularisation kernel. Observe that
$$
\E( \mu_\eps(S) | \cF_n) = \mu_\eps^n(S)
$$
where $\mu^n_\eps$ is defined as $\mu^n$ except with $h^n$ replaced by its regularisation $h^n_\eps = h^n \ast \theta_\eps$. [This follows from writing $h = h^n + X$ where $X$ is independent from $h^n$.]
When $n$ is finite and $\eps\to 0$ there is no problem in showing that the right hand side converges almost surely to $\mu^n (S)$, by continuity of $h^n$.
Hence the left hand side also converges a.s. to some limit as $\eps \to 0$, and we have
\begin{equation}\label{neps}
\mu^n(S) = \lim_{\eps \to 0} \E( \mu_\eps(S) | \cF_n).
\end{equation}
However, we have shown that $\mu_\eps(S)$ converges in $L^1(\P)$ to $\mu(S)$, and hence the above right hand side is in fact equal to the conditional expectation of $\mu(S)$, almost surely (convergence in $L^1$ of a sequence of random variables implies convergence in $L^1$ of its conditional expectations against a fixed $\sigma$-algebra). We deduce
$$
\mu^n(S) = \E( \mu(S) | \cF_n),
$$
almost surely, and hence $\mu'(S) = \mu(S)$, as desired.
\end{proof}

\section{Weak Convergence}

We now finish the proof of Theorem \ref{T:conv} by showing that the sequence of measures $\mu_\eps$ converges in probability for the weak topology towards a measure $\mu$ defined by the limits of quantities of the form $\mu_\eps(S), $ where $S$ is a cube such that $\bar S \subset D$. As the arguments are relatively standard we will be brief. We start by proving that the total mass of the measures $\mu_\eps$ converge in probability. Let $D_n$ be an increasing sequence of domains such that $\cup_n D_n = D$. Let $\ell = \sup_n \mu(D_n)$. Let us show that $\mu_\eps(D) \to \ell$ in probability as $\eps \to 0$. Note first that $\ell < \infty$ a.s., since by monotone convergence $\E(\ell) = \sup_n \E(\mu(D_n)) = \sigma (D) < \infty$ by our assumptions on $\sigma$.

Let $\delta >0$. Then we can write $\mu_\eps(D) = \mu_\eps(D_n) + \mu_\eps(D \setminus D_n) $ for any $n\ge 1$. The first term converges in probability as $\eps \to 0$ to $\mu(D_n)$ by the part of the theorem already proved, and for the second term, $ \E (\mu_\eps( D\setminus D_n)) = \sigma(D\setminus D_n) \le \delta^2$ for some $n$ sufficiently large depending only on $\delta$. Fixing that value of $n$, by Markov's inequality, $\P( \mu_\eps(D \setminus D_n) > \delta) \le \delta$, and for the same reason, $\P( |\ell - \mu(D_n) | > \delta) \le \delta$ as well. Then for $\eps$ small enough (depending only on on $n$ and $\delta$, and thus only on $\delta$) we see that $|\mu_\eps(D_n) - \mu(D_n) |\le \delta$ with probability at least $1- \delta$. Altogether, $ \P( \mu_\eps(D) - \ell |  \ge 2 \delta ) \le 3\delta$. So $ \mu_\eps(D) \to \ell$ in probability as $\eps \to 0$.

Now let us prove weak convergence. Let $\cA$ denote the $\pi$-system of subsets of $\R^d$ of
the form $ A = [x_1, y_1) \times \ldots \times[x_d, y_d)$ where $x_i, y_i \in \Q, 1\le i \le d$ and such that $\bar A \subset D$, and note that the $\sigma$-algebra generated by $\cA$ is the Borel $\sigma$-field on $D$.
We aim to check that for every deterministic sequence $\eps_k$ tending to zero, one can find a further (deterministic) subsequence $\eps'_k$ such that $\mu_{\eps'_k}$ converges almost surely for the weak topology on $D$. Observe that $\mu_\eps(A)$ converges in probability to $\mu (A)$ for any $A \in \cA$, by the part of the theorem which is already proved. Let $\eps_k$ be any sequence tending to zero. Fix any subsequence $\eps'_k$ such that $\mu_{\eps'_k}(A)$ converges to $\mu (A)$ almost surely for all $A \in \cA$ (which is possible since $\cA$ is countable).

Let $A = [x_1, y_1) \times \ldots \times[x_d, y_d)\in \cA$. We first claim that
\begin{equation}\label{pm1}
\mu(A) = \sup_{z_i} \{\mu ( [x_1, z_1) \times \ldots \times[x_d, z_d))   \}
\end{equation}
where the sup is over all $z_i < y_i, z_i \in \Q, 1 \le i \le d$. Indeed, clearly the left hand side is a.s. greater or equal to the right hand side, but both sides have the same expectation (if the Radon measure $\sigma(dx)$ is absolutely continuous with respect to Lebesgue measure -- if not, it may be necessary to translate all the cubes by a fixed independent random variable, say standard Gaussian in $\R^d$). Likewise, it is easy to check that
\begin{equation}\label{pm2}
\mu(A) = \inf_{z_i} \{\mu (  [x_1, z_1) \times \ldots \times[x_d, z_d))   \}
\end{equation}
where now the inf is over all $z_i >  y_i, z_i \in \Q, 1 \le i \le d$.

 Now, since $\mu_\eps(D)$ converges in probability, we can assume without loss of generality that our subsequence $\eps'_k$ is such that $\mu_{\eps'_k} (D)$ converges a.s. as $\eps'_k \to 0$. Hence the measures $\mu_{\eps'_k}$ are tight in the space of Borel measures on $D$ with the topology of weak convergence. Let $\tilde \mu$ be any weak limit.

 Then by the portmanteau theorem together with \eqref{pm1} and \eqref{pm2} it is easy to check that $\tilde \mu(A) = \mu(A)$ for any $A \in \cA$. (The portmanteau is more classically stated for probability measures, but there is no problem in using the theorem here since we already know convergence of the total mass, so we can equivalently work with the normalised measures $\mu_\eps / \mu_\eps(D)$).
 This identifies uniquely the limit $\tilde \mu$, and so in fact $\mu_{\eps'_k}$ converges a.s. weakly on $D$. As discussed already, this implies the weak convergence in probability of the measures $\mu_\eps$ on $D$. \qed

\end{document}